\newtheorem{theorem}{Theorem}[section]
\newtheorem{lemma}[theorem]{Lemma}
\newtheorem{proposition}[theorem]{Proposition}
\newtheorem{corollary}[theorem]{Corollary}
\numberwithin{equation}{section}
\begin{document}

\baselineskip=15pt

\title[Criterion for logarithmic connections with prescribed residues]{Criterion for 
logarithmic connections with prescribed residues}

\author[I. Biswas]{Indranil Biswas}

\address{School of Mathematics, Tata Institute of Fundamental
Research, Homi Bhabha Road, Mumbai 400005, India}

\email{indranil@math.tifr.res.in}

\author[A. Dan]{Ananyo Dan}

\address{BCAM -- Basque Centre for Applied Mathematics, Alameda de Mazarredo 14,
48009 Bilbao, Spain} 

\email{adan@bcamath.org}

\author[A. Paul]{Arjun Paul} 

\address{School of Mathematics, Tata Institute of Fundamental 
Research, Homi Bhabha Road, Mumbai 400005, India} 

\email{apmath90@math.tifr.res.in}

\subjclass[2010]{53B15, 14H60}

\keywords{Logarithmic connection, residue, rigidity, logarithmic Atiyah bundle.}

\begin{abstract}
A theorem of Weil and Atiyah says that a holomorphic vector bundle $E$ 
on a compact Riemann surface $X$ admits a holomorphic connection if and only if the 
degree of every direct summand of $E$ is zero. Fix a finite subset $S$ of $X$, 
and fix an endomorphism $A(x)\, \in\, \text{End}(E_x)$ for every $x\, \in\, S$. It is natural to ask 
when there is a logarithmic connection on $E$ singular over $S$ with residue $A(x)$
at every $x\, \in\, S$. We give a necessary and sufficient condition for it under the
assumption that the residues $A(x)$ are rigid.
\end{abstract}

\maketitle

\section{Introduction}

Let $X$ be a compact connected Riemann surface. Let $E$ be a holomorphic vector bundle 
on $X$. A holomorphic connection on $E$ is a flat connection on $E$ such that the 
(locally defined) flat sections of $E$ are holomorphic. It is known that $E$ admits a 
holomorphic connection if and only if every direct summand of $E$ is of degree zero 
\cite{We}, \cite{At}. In particular, an indecomposable holomorphic vector bundle on $X$ 
admits a holomorphic connection if and only if its degree is zero.

Fix a finite subset $S\, \subset\, X$. For each point $x\, \in\, S$, fix an 
endomorphism $A(x)\, \in\, \text{End}(E_x)$ of the fiber $E_x$ of the
vector bundle $E$. Here we address the 
following question: When there is a logarithmic connection on $E$ singular over $S$ 
with residue $A(x)$ at every $x\, \in\, S$?

First take the case where $E$ is simple, meaning all global holomorphic endomorphisms of 
$E$ are scalar multiplications. We prove the following (see Lemma \ref{ls}):

\begin{lemma}\label{l0}
Let $E$ be a simple holomorphic vector bundle on $X$. Then the following two statements
are equivalent:
\begin{enumerate}
\item There is a logarithmic connection on $E$ singular over $S$
with residue $A(x)$ for every $x\,\in\, S$.

\item The collection of endomorphisms $A(x)$, $x\, \in\, S$, satisfy the condition
$$
{\rm degree}(E)+\sum_{x\in S} {\rm trace}(A(x))\,=\, 0\, .
$$
\end{enumerate}
\end{lemma}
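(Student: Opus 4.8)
The plan is to split the equivalence into its two implications, with (1) $\Rightarrow$ (2) being the classical residue relation and (2) $\Rightarrow$ (1) a construction driven by the simplicity of $E$. The structural fact I would use throughout is that, once it is nonempty, the set $\mathcal{L}$ of logarithmic connections on $E$ singular over $S$ is an affine space modeled on $H^0(X,\, \text{End}(E)\otimes \Omega^1_X(\log S))$, because the difference of two such connections is an $\mathcal{O}_X$-linear homomorphism $E\,\to\, E\otimes \Omega^1_X(\log S)$. Taking residues at the points of $S$ then defines an affine map $\rho\colon \mathcal{L}\,\to\, \bigoplus_{x\in S}\text{End}(E_x)$, and the entire assertion is the statement that the image of $\rho$ is exactly the set of tuples satisfying (2).

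For (1) $\Rightarrow$ (2), I would pass from a logarithmic connection $D$ with residues $A(x)$ to the induced logarithmic connection $\det D$ on the line bundle $\det E$, whose residue at $x$ is $\text{trace}(A(x))$, and invoke the residue relation for logarithmic connections on a line bundle, $\deg(\det E)+\sum_{x\in S}\text{Res}_x(\det D)\,=\,0$. Since $\deg(\det E)\,=\,\deg(E)$ this is precisely (2). The same trace computation shows that the linear part of $\rho$, namely the residue homomorphism $r\colon H^0(X,\,\text{End}(E)\otimes \Omega^1_X(\log S))\,\to\, \bigoplus_{x\in S}\text{End}(E_x)$, lands in the codimension-one subspace $H_0\,=\,\{(C_x)_x : \sum_{x}\text{trace}(C_x)\,=\,0\}$: for a section $\omega$, the scalars $\text{trace}(\text{Res}_x(\omega))$ are the residues of the logarithmic $1$-form $\text{trace}(\omega)$, whose sum over $X$ vanishes.

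For (2) $\Rightarrow$ (1), the two inputs I would extract from simplicity, via Serre duality and $\text{End}(E)^\vee\cong \text{End}(E)$, are $H^1(X,\,\text{End}(E)\otimes \Omega^1_X(\log S))^*\cong H^0(X,\,\text{End}(E)\otimes \mathcal{O}_X(-S))$ and $H^1(X,\,\text{End}(E)\otimes \Omega^1_X)^*\cong H^0(X,\,\text{End}(E))$. Because $E$ is simple, $H^0(X,\,\text{End}(E))=\mathbb{C}$; and, assuming $S\neq\emptyset$, a nonzero scalar endomorphism does not vanish on $S$, so $H^0(X,\,\text{End}(E)\otimes \mathcal{O}_X(-S))=0$. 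The first vanishing kills the obstruction to a logarithmic connection---the class of the logarithmic Atiyah extension $0\to \text{End}(E)\to \text{At}_S(E)\to T_X(-\log S)\to 0$ in $H^1(X,\,\text{End}(E)\otimes \Omega^1_X(\log S))$---so $\mathcal{L}\neq\emptyset$; the second gives $\dim_{\mathbb{C}} H^1(X,\,\text{End}(E)\otimes \Omega^1_X)=1$.

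I would then read off the image of $r$ from the long exact cohomology sequence associated with the residue sequence
\[
0\,\longrightarrow\, \text{End}(E)\otimes \Omega^1_X\,\longrightarrow\, \text{End}(E)\otimes \Omega^1_X(\log S)\,\xrightarrow{\ \text{Res}\ }\, \bigoplus_{x\in S}\text{End}(E_x)\,\longrightarrow\, 0\, ,
\]
which identifies $\text{image}(r)$ with the kernel of the connecting map $\delta\colon \bigoplus_{x\in S}\text{End}(E_x)\,\to\, H^1(X,\,\text{End}(E)\otimes \Omega^1_X)$. As the target is one-dimensional, $\text{image}(r)$ has codimension at most one; being already contained in the codimension-one subspace $H_0$, it must equal $H_0$. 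To finish, I would pick any $D_0\in\mathcal{L}$ with residues $B_x$; by the relation just proved for $D_0$ together with hypothesis (2), the tuple $(A(x)-B_x)_x$ lies in $H_0=\text{image}(r)$, so there is a section $\omega$ with $\text{Res}_x(\omega)=A(x)-B_x$, and $D_0+\omega$ is a logarithmic connection with residues $A(x)$. The one genuinely substantial step---and the only place where simplicity is used in an essential way---is pinning $\text{image}(r)$ down exactly: it all hinges on $H^1(X,\,\text{End}(E)\otimes \Omega^1_X)$ being one-dimensional, which collapses the codimension bound and forces $r$ to surject onto the full trace-zero hyperplane.
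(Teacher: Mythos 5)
Your argument is correct, but it follows a genuinely different route from the paper's. The paper encodes the prescribed residues $A(x)$ into a single modified logarithmic Atiyah bundle $\mathcal{A}(E)$ (an elementary modification of $\mathrm{At}(E)(-\log S)$ along the lines $\mathbb{C}\cdot(A(x),1)$), so that the desired connection exists if and only if one extension class $\phi^A_E\,\in\, H^1(X,\mathrm{End}(E)\otimes K_X)\,\cong\, H^0(X,\mathrm{End}(E))^*$ vanishes; simplicity then reduces everything to evaluating this functional on $\mathrm{Id}_E$, which is handled by passing to $\det E$ and explicitly producing a logarithmic $1$-form with prescribed residues (Lemma \ref{lem3}). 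You instead run a two-step ``exist, then correct'' argument: first kill the obstruction to the existence of \emph{some} logarithmic connection using $H^1(X,\mathrm{End}(E)\otimes\Omega^1_X(\log S))^*\cong H^0(X,\mathrm{End}(E)\otimes\mathcal{O}_X(-S))=0$, then use the residue exact sequence and the one-dimensionality of $H^1(X,\mathrm{End}(E)\otimes K_X)$ to show the residue map surjects onto the trace-zero hyperplane, so you can correct the residues by adding a global section of $\mathrm{End}(E)\otimes\Omega^1_X(\log S)$. Both steps are sound, and your identification of $\mathrm{image}(r)$ with $H_0$ (codimension at most one by the connecting map, at least one by the residue theorem applied to $\mathrm{trace}(\omega)$) is exactly the ``substantial step'' you flag. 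What your approach buys is a more self-contained argument for the simple case, avoiding the curvature/Dolbeault computation of Proposition \ref{prop-e}; what it costs is generality: it leans on $\dim H^1(X,\mathrm{End}(E)\otimes K_X)=1$ and on $H^0(X,\mathrm{End}(E)\otimes\mathcal{O}_X(-S))=0$, both of which fail for merely indecomposable $E$, whereas the paper's single-obstruction-class formalism is precisely what carries over to Proposition \ref{prop1}. Two minor points to tidy up: your argument as written needs $S\neq\emptyset$ (the case $S=\emptyset$ is the classical Atiyah--Weil criterion for the indecomposable bundle $E$, so it should be noted separately), and the implication (1) $\Rightarrow$ (2) via the determinant connection is fine and matches the paper's citation of Ohtsuki in substance.
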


Note that Lemma \ref{l0} holds for stable vector bundles because they are simple.

An endomorphism $A\, \in\, \text{End}(E_x)$, where $x\, \in\, X$, will be called
rigid if $A$ commutes with every global holomorphic endomorphism of $E$. In other
words, $A$ is rigid if and only if $A\circ v(x)\,=\, v(x)\circ A$ for all
$v\, \in\, H^0(X,\, \text{End}(E))$.

Now take the case where the holomorphic vector bundle $E$ is indecomposable, meaning it 
does not split into a direct sum of holomorphic vector bundles of positive ranks. We 
prove the following (see Proposition \ref{prop1}):

\begin{proposition}\label{p0}
Let $E$ be an indecomposable vector bundle on $X$. For each point $x\in\, S$, fix a rigid
endomorphism
$A(x) \, \in\, {\rm End}(E_x)$. Then the following are equivalent:
\begin{enumerate}
\item There is a logarithmic connection on $E$ singular over $S$
such that the residue is $A(x)$ for every $x\,\in\, S$.

\item The collection $A(x)$, $x\, \in\, S$, satisfy the condition
$$
{\rm degree}(E)+\sum_{x\in S} {\rm trace}(A(x))\,=\, 0\, .
$$
\end{enumerate}
\end{proposition}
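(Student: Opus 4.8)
The plan is to prove the two implications separately, the implication (1) $\Rightarrow$ (2) being straightforward and (2) $\Rightarrow$ (1) carrying essentially all the weight. For (1) $\Rightarrow$ (2), suppose $\nabla$ is a logarithmic connection on $E$ singular over $S$ with residue $A(x)$ at each $x$. I would push $\nabla$ forward to the determinant line bundle: the induced logarithmic connection $\det\nabla$ on $\bigwedge^{\rm top} E$ is singular over $S$ with residue $\text{trace}(A(x))$ at each $x\in S$. For a line bundle the residue theorem gives $\text{degree}(\bigwedge^{\rm top}E)+\sum_{x\in S}\text{trace}(A(x))\,=\,0$, and since $\text{degree}(\bigwedge^{\rm top}E)=\text{degree}(E)$ this is exactly (2). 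This direction uses nothing about indecomposability or rigidity.

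For (2) $\Rightarrow$ (1) I would argue by obstruction theory on the logarithmic Atiyah sequence
$$0\,\longrightarrow\, \text{End}(E)\,\longrightarrow\, \text{At}_D(E)\,\xrightarrow{\ \sigma\ }\, T_X(-\log D)\,\longrightarrow\, 0\, ,$$
where $D=\sum_{x\in S}x$. A logarithmic connection singular over $S$ is a holomorphic splitting of $\sigma$, and its residue at $x$ is read off from the induced element of $\text{End}(E_x)$. Since one can always write down local logarithmic connections realizing the prescribed residues $A(x)$, their differences over overlaps are pole-free, hence honest sections of $\text{End}(E)\otimes K_X$; they glue to a single obstruction class $\omega\in H^1(X,\,\text{End}(E)\otimes K_X)$ with the property that $\omega=0$ if and only if a global logarithmic connection with residues $A(x)$ exists. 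The problem is thus reduced to showing $\omega=0$.

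To show $\omega=0$ I would invoke Serre duality, identifying $H^1(X,\,\text{End}(E)\otimes K_X)$ with $H^0(X,\,\text{End}(E))^{*}$ via the pairing $(\theta,v)\mapsto\int_X\text{trace}(v\,\theta)$; thus $\omega=0$ if and only if $\langle\omega,v\rangle=0$ for every $v\in H^0(X,\text{End}(E))$. Because $E$ is indecomposable, $H^0(X,\text{End}(E))$ is a local ring, so every such $v$ can be written $v=\lambda\,\text{Id}_E+n$ with $\lambda$ a scalar and $n$ a globally nilpotent endomorphism. For $v=\text{Id}_E$ the pairing $\langle\omega,\text{Id}_E\rangle=\int_X\text{trace}(\omega)$ is, up to a nonzero scalar, the determinant obstruction computed above, namely $\text{degree}(E)+\sum_{x\in S}\text{trace}(A(x))$, which vanishes by hypothesis (2). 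It therefore remains to prove $\langle\omega,n\rangle=0$ for every global nilpotent $n$, and this is the crux.

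I expect the nilpotent case to be the main obstacle, and I would handle it by splitting $\omega$ into a residue part and a curvature part: writing $\langle\omega,n\rangle=\langle a(E),n\rangle+\sum_{x\in S}\text{trace}(A(x)\,n(x))$, where $a(E)$ is the ordinary Atiyah class. The second sum is where rigidity enters: since $A(x)$ commutes with $n(x)$ and $n(x)$ is nilpotent, the two can be simultaneously triangularized, so $A(x)\,n(x)$ is strictly upper triangular and $\text{trace}(A(x)\,n(x))=0$. For the first term I would use the filtration of $E$ by the subbundles $\ker(n^{i})$ (kernels of bundle maps are saturated on a smooth curve); here $n$ strictly lowers the filtration and hence induces the zero map on every successive quotient, so by functoriality of the Atiyah class the pairing $\langle a(E),n\rangle$ is computed on the associated graded, where $n$ acts as $0$, giving $\langle a(E),n\rangle=0$. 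Combining the three facts yields $\langle\omega,v\rangle=0$ for all $v$, whence $\omega=0$ and the desired connection exists. The delicate points to get exactly right are the definition of $\text{At}_D(E)$ together with its residue map, so that a splitting genuinely matches a logarithmic connection with residue $A(x)$, and the compatibility of Serre duality with the decomposition $\langle\omega,n\rangle=\langle a(E),n\rangle+\sum_{x\in S}\text{trace}(A(x)\,n(x))$.
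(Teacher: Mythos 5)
Your proposal is correct and follows essentially the same route as the paper: reduce to the vanishing of an obstruction class in $H^1(X,\,\mathrm{End}(E)\otimes K_X)$, pair it via Serre duality with $H^0(X,\,\mathrm{End}(E))$, use indecomposability to write every global endomorphism as $\lambda\,\mathrm{Id}_E+n$ with $n$ nilpotent, kill the $\mathrm{Id}_E$ term by the trace hypothesis, and kill the nilpotent term by combining the vanishing of the Atiyah-class pairing on nilpotents with $\mathrm{trace}(A(x)\,n(x))=0$, which is exactly where rigidity and nilpotence enter. The one step you assert rather than prove --- the decomposition $\langle\omega,n\rangle=\langle a(E),n\rangle+\sum_{x\in S}\mathrm{trace}(A(x)\,n(x))$ --- is precisely what the paper establishes as Proposition \ref{prop-e} via a Dolbeault/curvature computation, and you correctly flag it as the delicate point.
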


Using Proposition \ref{p0}, the following is proved (see Theorem \ref{thm1}):

\begin{theorem}\label{thm0}
Let $E$ be a holomorphic vector bundle on $X$.
For each point $x\in\, S$, fix a rigid endomorphism
$A(x) \, \in\, {\rm End}(E_x)$. Then the following hold:
\begin{enumerate}
\item For every direct summand $F\, \subset\, E$,
$$A(x)(F_x)\, \subset\, F_x$$ for every $x\, \in\, S$.

\item The vector bundle $E$ admits a logarithmic
connection on $E$ singular over $S$, with residue $A(x)$ at every $x\, \in\, S$, if and
only if for every direct summand $F\, \subset\, E$,
$$
{\rm degree}(F)+\sum_{x\in S} {\rm trace}(A(x)\vert_{F_x})\,=\, 0\, .
$$
\end{enumerate}
\end{theorem}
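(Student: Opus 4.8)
The plan is to reduce the general statement to the indecomposable case, which is already settled by Proposition~\ref{p0}, by splitting $E$ into indecomposable summands and transporting both the rigidity hypothesis and the numerical condition to each factor. The whole argument hinges on part~(1), so I would establish that first. Fix a direct summand $F\subset E$, choose a holomorphic splitting $E\,=\,F\oplus F'$, and let $p\,\in\, H^0(X,\,\text{End}(E))$ be the corresponding idempotent with image $F$. Rigidity of $A(x)$ gives $A(x)\circ p(x)\,=\,p(x)\circ A(x)$. For $v\,\in\, F_x$ we have $p(x)v\,=\,v$, hence $A(x)v\,=\,A(x)p(x)v\,=\,p(x)A(x)v\,\in\,\text{image}(p(x))\,=\,F_x$, which is exactly $A(x)(F_x)\subset F_x$.

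For the sufficiency direction of~(2), write $E\,=\,\bigoplus_{i=1}^{n}E_i$ with each $E_i$ indecomposable. Each $E_i$ is a direct summand, so by part~(1) the operator $A(x)$ preserves $(E_i)_x$; with respect to $E_x\,=\,\bigoplus_i (E_i)_x$ this means $A(x)$ is block diagonal with $i$-th block $A(x)\vert_{(E_i)_x}$. I would next check that $A(x)\vert_{(E_i)_x}$ is rigid as an endomorphism of $E_i$: any $w\,\in\, H^0(X,\,\text{End}(E_i))$ extends to $\widetilde{w}\,=\,\iota_i\circ w\circ\pi_i\,\in\, H^0(X,\,\text{End}(E))$, and since $A(x)$ commutes with $\widetilde{w}(x)$ and preserves $(E_i)_x$, restricting this commutation to $(E_i)_x$ yields $A(x)\vert_{(E_i)_x}\circ w(x)\,=\,w(x)\circ A(x)\vert_{(E_i)_x}$. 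Assuming the degree--trace condition for every direct summand, I apply it to $F\,=\,E_i$ and invoke Proposition~\ref{p0} to obtain a logarithmic connection $D_i$ on $E_i$, singular over $S$, with residue $A(x)\vert_{(E_i)_x}$. The direct sum $D\,=\,\bigoplus_i D_i$ is then a logarithmic connection on $E$ whose residue at each $x$ is $\bigoplus_i A(x)\vert_{(E_i)_x}$, which equals $A(x)$ by the block-diagonal description above.

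For necessity, suppose $D$ is a logarithmic connection on $E$ with residue $A(x)$, and let $F\subset E$ be any direct summand with idempotent $p$. I would define the induced connection $D_F\,=\,(p\otimes\text{Id})\circ D\vert_{F}$; the Leibniz rule for $D$ together with $p\vert_F\,=\,\text{Id}$ shows $D_F$ is a logarithmic connection on $F$, singular over $S$. Its residue at $x$ is $p(x)\circ A(x)\vert_{F_x}$, and by part~(1) together with $p(x)\vert_{F_x}\,=\,\text{Id}$ this is just $A(x)\vert_{F_x}$. The residue formula ${\rm degree}(F)+\sum_{x\in S}{\rm trace}({\rm Res}_x(D_F))\,=\,0$ for logarithmic connections then gives the desired identity for $F$.

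The hard part will be the bookkeeping around Proposition~\ref{p0}: one must verify simultaneously that each restricted residue $A(x)\vert_{(E_i)_x}$ is rigid \emph{for} $E_i$ (not merely for $E$) and that reassembling the factorwise connections reproduces $A(x)$ exactly rather than just its diagonal blocks. Both points rest entirely on part~(1), which is why I isolate it at the outset; once the block-diagonal structure of $A(x)$ is known, the direct-sum construction is essentially forced. In the necessity direction the only substantive external input is the residue formula, and the mild subtlety there is that it must be applied to the induced connection $D_F$ on the summand rather than to $D$ on $E$.
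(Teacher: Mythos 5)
Your proof is correct and follows essentially the same route as the paper: decompose $E$ into indecomposable summands, transfer the hypotheses to each factor, apply Proposition~\ref{p0} piecewise, and reassemble. Your idempotent argument for part~(1) replaces the paper's appeal to a maximal torus of $\mathrm{Aut}(E)$ (the two are interchangeable, since the idempotents of the splitting generate that torus), and your explicit checks that $A(x)\vert_{(E_i)_x}$ is rigid \emph{for} $E_i$ and that the induced connection on a summand has the restricted residue are details the paper leaves implicit but which are genuinely needed to invoke Proposition~\ref{p0}.
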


\section{Logarithmic connections and residue}

\subsection{Logarithmic connections on $E$}

Let $X$ be a compact connected Riemann surface. Let $E$ be a holomorphic vector bundle
on $X$. The fiber of $E$ over any point $x\, \in\, X$ will be denoted by $E_x$.

Fix finitely many distinct points
$$
S\, :=\, \{x_1\, ,\cdots\, , x_d\}\, \subset\, X\, .
$$
The reduced effective divisor $x_1+\ldots + x_d$ on $X$ will also be denoted by $S$.

The holomorphic cotangent bundle $\Omega_X$ of $X$ will also be denoted by $K_X$. A
logarithmic connection on $E$ 
singular over $S$ is a first order holomorphic differential operator
$$
D\, :\, E\, \longrightarrow\, E\otimes\Omega_X(\log S)\,=\, E\otimes 
K_X\otimes{\mathcal O}_X(S)
$$
satisfying the Leibniz identity which says that
$$
D(fs) \,=\, fD(s) + s\otimes (df)\, ,
$$
where $s$ is any locally defined holomorphic section of $E$ and $f$ is any locally defined
holomorphic function on $X$ \cite{De}.

For any point $x\, \in\, S$, the fiber $(K_X\otimes{\mathcal O}_X(S))_x$ is identified 
with $\mathbb C$ using the Poincar\'e adjunction formula \cite[p.~146]{GH}. More 
explicitly, for any holomorphic coordinate $z$ around $x$ with $z(x)\,=\, 0$, the image 
of $\frac{dz}{z}$ in $(K_X\otimes{\mathcal O}_X(S))_x$ is independent of the choice of 
the coordinate function; the above identification between $(K_X\otimes{\mathcal O}_X(S))_x$
and $\mathbb C$ sends this independent image to 
$1\,\in\, \mathbb C$. For a logarithmic connection $D$ on $E$ singular over $S$, and any 
$x\,\in\, S$, consider the composition
$$
E \,\stackrel{D}{\longrightarrow}\, E\otimes K_X\otimes{\mathcal O}_X(S)\, \longrightarrow\,
E_x\otimes (K_X\otimes{\mathcal O}_X(S))_x\,=\,E_x\, .
$$
The above Leibniz identity implies that it is ${\mathcal O}_X$--linear; hence this composition
is an element of $\text{End}(E_x)\,=\, \text{End}(E)_x$. This element of
$\text{End}(E_x)$, which we will denote by $\text{Res}(D,x)$,
is called the \textit{residue} of $D$ at $x$ \cite[p.~53]{De}.

Let ${\rm Diff}^i(E,\, E)$ be the holomorphic vector bundle on $X$
whose holomorphic sections over any open subset $U\, \subset\, X$ are the holomorphic
differential operators $E\vert_U\, \longrightarrow\, E\vert_U$
of order at most $i$. So we have a short exact sequence of holomorphic vector bundles 
\begin{equation}\label{f1}
0\, \longrightarrow\, \text{End}(E)\, =\, {\rm Diff}^0(E,\, E)\, \longrightarrow\,
{\rm Diff}^1(E,\, E)\, \stackrel{\sigma}{\longrightarrow}\,\text{End}(E)\otimes TX
\, \longrightarrow\, 0\, ,
\end{equation}
where $TX$ is the holomorphic tangent bundle of $X$, and $\sigma$ is the symbol homomorphism. Now
define the subsheaf
$$
\text{At}(E)(-\log S)\,:=\, \sigma^{-1}(\text{Id}_E\otimes_{\mathbb C}TX\otimes
{\mathcal O}_X(-S)) \,\subset\, {\rm Diff}^1(E,\, E)\, .
$$
It fits in the following short exact sequence of holomorphic vector bundles on $X$ 
obtained from \eqref{f1}:
\begin{equation}\label{e4}
0\, \longrightarrow\, \text{End}(E)\, \longrightarrow\,
\text{At}(E)(-\log S)\, \stackrel{\widehat\sigma}{\longrightarrow}\,TX(-\log S) \,:=\,
TX\otimes {\mathcal O}_X(-S)\, \longrightarrow\, 0\, ,
\end{equation}
where $\widehat\sigma$ is the restriction of $\sigma$ to the subsheaf $\text{At}(E)(-\log S)$.
A logarithmic connection on $E$ singular over $S$ is a
holomorphic splitting of \eqref{e4}, meaning a
${\mathcal O}_X$--linear homomorphism
\begin{equation}\label{h}
h\, :\, TX(-\log S)\, \longrightarrow\,\text{At}(E)(-\log S)
\end{equation}
such that ${\widehat\sigma}\circ h\, =\, \text{Id}_{TX(-\log S)}$.

For any $x\, \in\, S$, the composition
$$
\text{At}(E)(-\log S)_x\, \longrightarrow\, {\rm Diff}^1(E,\, E)_x \,
\stackrel{\sigma(x)}{\longrightarrow}\,(\text{End}(E)\otimes TX)_x
$$
vanishes; the above homomorphism $\text{At}(E)(-\log S)_x\, \longrightarrow\, {\rm Diff}^1(E,\, E)_x$
is given by the inclusion of $\text{At}(E)(-\log S)$ in ${\rm Diff}^1(E,\, E)$. Therefore, from
\eqref{f1} we get a homomorphism
\begin{equation}\label{j}
j_x\, :\, \text{At}(E)(-\log S)_x\, \longrightarrow\, \text{End}(E_x)\,=\,
\text{End}(E)_x\, .
\end{equation}
For any logarithmic connection $h$ as in \eqref{h}, the element
$$
j_x(h(x)(1))\,\in\, \text{End}(E_x)
$$
is the residue of $h$ at $x$; here $1\, \in\, \mathbb C$ is considered as
an element of $TX(-\log S)_x$ using the earlier mentioned identification between
$TX(-\log S)_x$ and $\mathbb C$.

\subsection{Logarithmic connections with given residues}

\begin{lemma}\label{lem1}
For any $x\, \in\, S$, the fiber ${\rm At}(E)(-\log S)_x$ has a canonical decomposition
$$
{\rm At}(E)(-\log S)_x\,=\, {\rm End}(E)_x\oplus TX(-\log S)_x\,=\,
{\rm End}(E_x)\oplus \mathbb C\, .
$$
\end{lemma}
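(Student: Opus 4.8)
The plan is to realize the asserted decomposition as a canonical splitting, at the point $x\in S$, of the fiber of the exact sequence \eqref{e4}. Restricting \eqref{e4} to $x$ yields a short exact sequence of vector spaces
$$
0\,\longrightarrow\, \text{End}(E)_x\,\stackrel{\iota_x}{\longrightarrow}\,
\text{At}(E)(-\log S)_x\,\stackrel{\widehat\sigma(x)}{\longrightarrow}\, TX(-\log S)_x\,\longrightarrow\, 0\, ,
$$
so $\iota_x$ is injective with image the canonical subspace $\ker\widehat\sigma(x)\,=\,\text{End}(E)_x$, and the two outer fiber dimensions add up to $\dim\text{At}(E)(-\log S)_x$. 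Thus everything reduces to exhibiting a \emph{canonical} complement to $\iota_x(\text{End}(E)_x)$, and for this I would use the homomorphism $j_x$ of \eqref{j}.

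The key step is to check that $j_x$ is a retraction of $\iota_x$, that is, $j_x\circ \iota_x\,=\,\text{Id}_{\text{End}(E)_x}$. I would verify this by chasing \eqref{f1}: the composite $\text{End}(E)_x\stackrel{\iota_x}{\longrightarrow}\text{At}(E)(-\log S)_x\longrightarrow{\rm Diff}^1(E,E)_x$ is exactly the fiber at $x$ of the inclusion $\text{End}(E)\,=\,{\rm Diff}^0(E,E)\hookrightarrow{\rm Diff}^1(E,E)$ of \eqref{f1}, whose image is $\ker\sigma(x)$, while $j_x$ is by definition the further identification of this image with $\ker\sigma(x)\,=\,\text{End}(E)_x$ coming from the fiber of \eqref{f1}. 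Granting $j_x\circ\iota_x=\text{Id}$, the injection $\iota_x$ is split and one obtains the canonical decomposition
$$
\text{At}(E)(-\log S)_x\,=\,\iota_x(\text{End}(E)_x)\oplus \ker j_x\,=\,\text{End}(E)_x\oplus\ker j_x\, .
$$

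It then remains to identify the complementary line $\ker j_x$ with $TX(-\log S)_x$. Since $\ker\widehat\sigma(x)=\iota_x(\text{End}(E)_x)$ meets $\ker j_x$ trivially (by the retraction property just established), the restriction $\widehat\sigma(x)\vert_{\ker j_x}$ is injective; as both $\ker j_x$ and $TX(-\log S)_x$ are one-dimensional, it is an isomorphism. This gives $\text{At}(E)(-\log S)_x=\text{End}(E)_x\oplus TX(-\log S)_x$, and the final identification $TX(-\log S)_x=\mathbb C$ is the Poincar\'e adjunction isomorphism recorded above, dual to $\frac{dz}{z}\mapsto 1$, sending the class of $z\,\partial_z$ at $x$ to $1$.

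The step requiring the most care is the verification that $j_x$ is genuinely a retraction, because the inclusion $\text{At}(E)(-\log S)\hookrightarrow{\rm Diff}^1(E,E)$ is \emph{not} a subbundle inclusion over the points of $S$: in a local coordinate $z$ with $z(x)=0$ the bundle $\text{At}(E)(-\log S)$ is generated by ${\rm Diff}^0(E,E)$ together with the section $z\,\text{Id}_E\,\partial_z$, and this last section maps to $0$ in ${\rm Diff}^1(E,E)_x$, so the fiber map drops rank. It is precisely the $-S$ twist, forcing the symbol of every local section of $\text{At}(E)(-\log S)$ to vanish at $x$, that makes $j_x$ — and hence the canonical splitting — exist at all; no such canonical complement is available at points outside $S$. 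I would also record that the line $\ker j_x$ is independent of the coordinate, which is the same statement as the coordinate-independence of the class of $z\,\partial_z$ at $x$ underlying the adjunction identification.
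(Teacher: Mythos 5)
Your proof is correct, and it lands on the same canonical decomposition as the paper --- both realize the complement of $\text{End}(E)_x$ as $\ker j_x$, identified with $TX(-\log S)_x$ via $\widehat\sigma(x)$ --- but your derivation is genuinely more direct. The paper introduces the auxiliary subsheaf $\text{At}(E)\otimes{\mathcal O}_X(-S)\subset\text{At}(E)(-\log S)$, compares the fibers at $x$ of the two exact sequences \eqref{at2} and \eqref{e4} in a commutative diagram, and invokes the snake lemma (using that the left vertical arrow vanishes because $x\in S$) to produce the complementary line as the image of $(\text{At}(E)\otimes{\mathcal O}_X(-S))_x$, only afterwards noting that this line equals $\ker j_x$. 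You bypass the auxiliary sheaf entirely by observing that $j_x$ is a retraction of the fiber inclusion $\iota_x\colon\text{End}(E)_x\hookrightarrow\text{At}(E)(-\log S)_x$ --- immediate, as you say, because the composite $\text{End}(E)\to\text{At}(E)(-\log S)\to{\rm Diff}^1(E,E)$ is the honest subbundle inclusion ${\rm Diff}^0\subset{\rm Diff}^1$ of \eqref{f1} --- after which the splitting $\text{im}(\iota_x)\oplus\ker j_x$ and the identification of $\ker j_x$ with $TX(-\log S)_x$ by $\widehat\sigma(x)$ are pure linear algebra. Your closing remark also isolates the real content correctly: the fiber map $\text{At}(E)(-\log S)_x\to{\rm Diff}^1(E,E)_x$ drops rank precisely at points of $S$, killing the class of the logarithmic generator, which is why $j_x$ and hence the canonical complement exist only over $S$. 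The only nitpick is notational: $z\,\text{Id}_E\,\partial_z$ should be read as $z\nabla_{\partial_z}$ for a local trivialization or local connection $\nabla$, since $\text{Id}_E\partial_z$ is not intrinsically a differential operator on $E$; this does not affect the argument. What the paper's longer route buys is an explicit realization of the complementary line as the image of the fiber of the twisted Atiyah bundle; what yours buys is brevity and the avoidance of the snake lemma.
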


\begin{proof}
We recall the definition of the Atiyah bundle: 
$$
\text{At}(E)\,:=\, \sigma^{-1}(\text{Id}_E\otimes_{\mathbb C} TX) \,\subset\,
{\rm Diff}^1(E,\, E)\, ,
$$
where $\sigma$ is the symbol map in \eqref{f1} \cite{At}. It fits in the Atiyah exact sequence
\begin{equation}\label{at}
0\, \longrightarrow\, \text{End}(E)\, \longrightarrow\,
\text{At}(E)\, \stackrel{\widetilde\sigma}{\longrightarrow}\, TX\, \longrightarrow\, 0\, ,
\end{equation}
where $\widetilde\sigma$ is the restriction of $\sigma$. Let
\begin{equation}\label{at2}
0\, \longrightarrow\, \text{End}(E)\otimes {\mathcal O}_X(-S)\, \longrightarrow\,
\text{At}(E)\otimes {\mathcal O}_X(-S)\, \longrightarrow\, TX\otimes {\mathcal O}_X(-S)\,
\longrightarrow\, 0
\end{equation}
be the tensor product of the Atiyah exact sequence with the line bundle
${\mathcal O}_X(-S)$. From this exact sequence and \eqref{e4} it follows immediately that
$$
\text{At}(E)\otimes {\mathcal O}_X(-S)\, \subset\, \text{At}(E)(-\log S)\, .
$$
For any $x\, \in\, S$, from this inclusion of coherent sheaves and the exact sequences
\eqref{e4} and \eqref{at} we have the commutative diagram
$$
\begin{matrix}
0 & \longrightarrow & (\text{End}(E)\otimes {\mathcal O}_X(-S))_x
& \stackrel{c}{\longrightarrow} &\text{At}(E)\otimes {\mathcal O}_X(-S)_x & \longrightarrow &
(TX\otimes {\mathcal O}_X(-S))_x & \longrightarrow & 0\\
&& ~ \Big\downarrow a && ~ \Big\downarrow b && \Vert\\
0 & \longrightarrow & \text{End}(E)_x
& \longrightarrow &\text{At}(E)(-\log S)_x & \longrightarrow &
(TX\otimes {\mathcal O}_X(-S))_x & \longrightarrow & 0
\end{matrix}
$$
where $a$ is the zero homomorphism because $x\, \in\, S$. Now from the snake lemma
(see \cite[p.~158, Lemma~9.1]{La}) it follows that the kernel 
of the homomorphism $b$ coincides with the image of $c$. Hence the image of the fiber 
$(\text{At}(E)\otimes {\mathcal O}_X(-S))_x$ in $\text{At}(E)(-\log S)_x$ is identified 
with the quotient line $TX(-\log S)_x\,=\, {\mathbb C}$ of $\text{At}(E)\otimes {\mathcal 
O}_X(-S)_x$. It is easy to see that this image $TX(-\log S)_x\, \subset\, 
\text{At}(E)(-\log S)_x$ of $b$ coincides with the kernel of the homomorphism $j_x$ in 
\eqref{j}. On the other hand, from \eqref{e4} it follows that $\text{End}(E)_x\, 
\subset\, \text{At}(E)(-\log S)_x$. Now it is straight-forward to check that the 
resulting homomorphism
$$
\text{End}(E)_x\oplus TX(-\log S)_x\,\longrightarrow\,\text{At}(E)(-\log S)_x
$$
is an isomorphism.
\end{proof}

For each point $x\in\, S$, fix an endomorphism
$$
A(x) \, \in\, \text{End}(E_x)\, .
$$
In view of Lemma \ref{lem1}, we have the complex line
$$
\ell_x\,:=\, {\mathbb C}\cdot (A(x)\, ,1) \, \subset\, \text{End}(E_x)\oplus {\mathbb C}\,=\,
\text{At}(E)(-\log S)_x\, .
$$
Let ${\mathcal A}(E)\, \longrightarrow\, X$ be the holomorphic vector bundle that fits in the
short exact sequence of coherent sheaves
$$
0\, \longrightarrow\, {\mathcal A}(E)\, \longrightarrow\,
\text{At}(E)(-\log S)\, \longrightarrow\, \bigoplus_{x\in S}
\text{At}(E)(-\log S)_x/\ell_x\, \longrightarrow\, 0\, ,
$$
where $\ell_x$ is the line constructed above. From \eqref{e4} we have the short exact sequence
\begin{equation}\label{f2}
0\, \longrightarrow\, \text{End}(E)\otimes
{\mathcal O}_X(-S)\, \longrightarrow\,\mathcal{A}(E)\,
\stackrel{\sigma'}{\longrightarrow}\,TX(-\log S)\, \longrightarrow\, 0\, ,
\end{equation}
where $\sigma'$ is the restriction of the homomorphism $\widehat\sigma$ in \eqref{e4}.

The following lemma is a straight-forward consequence of the definitions of logarithmic
connection and residue:

\begin{lemma}\label{lem2}
A logarithmic connection on $E$ singular over $S$ with residue $A(x)$, $x\, \in\, S$, is
a holomorphic homomorphism
$$
h\, :\, TX(-\log S)\, \longrightarrow\,\mathcal{A}(E)
$$
such that $\sigma'\circ h\,=\, {\rm Id}_{TX(-\log S)}$, where $\sigma'$ is the
homomorphism in \eqref{f2}.
\end{lemma}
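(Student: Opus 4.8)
The plan is to read the statement off directly from the definitions, exhibiting a bijection between splittings of \eqref{f2} and logarithmic connections on $E$ with residue $A(x)$ at every $x\in S$. Recall that, by definition, a logarithmic connection on $E$ singular over $S$ is a splitting $h\colon TX(-\log S)\to \text{At}(E)(-\log S)$ of \eqref{e4}, that is, a homomorphism with $\widehat\sigma\circ h=\text{Id}_{TX(-\log S)}$. I would first compute the residue of such an $h$ at a point $x\in S$ using the canonical decomposition of Lemma \ref{lem1}. Under the identification $\text{At}(E)(-\log S)_x=\text{End}(E_x)\oplus\mathbb C$, the fiber map $\widehat\sigma(x)$ is the projection onto the second factor (its kernel being $\text{End}(E)_x$), while $j_x$ from \eqref{j} is the projection onto the first factor (its kernel being $TX(-\log S)_x$, as established in the proof of Lemma \ref{lem1}). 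Since $\widehat\sigma(x)\circ h(x)=\text{Id}$, the element $h(x)(1)$ has the form $(v,1)$ for a unique $v\in\text{End}(E_x)$, and by the description of the residue preceding Lemma \ref{lem1} the residue of $h$ at $x$ equals $j_x(h(x)(1))=v$. Hence the residue at $x$ equals $A(x)$ if and only if $h(x)(1)=(A(x),1)$, which is equivalent to the containment $h(x)(TX(-\log S)_x)\subset \ell_x$.

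The key step is then to recognise this pointwise containment condition, imposed at all points of $S$, as precisely the condition that $h$ factor through the subsheaf $\mathcal A(E)$. By construction $\mathcal A(E)$ is the kernel of the surjection $q\colon\text{At}(E)(-\log S)\to\bigoplus_{x\in S}\text{At}(E)(-\log S)_x/\ell_x$ onto a torsion sheaf supported on $S$. Because $TX(-\log S)$ is locally free and the target of $q$ is a direct sum of skyscraper sheaves, the composite $q\circ h$ is determined by its induced maps on the fibers at the points of $S$, and it vanishes if and only if $h(x)(TX(-\log S)_x)\subset \ell_x$ for each $x\in S$. Thus $q\circ h=0$, equivalently $h$ factors uniquely as $h=\iota\circ\widetilde h$ with $\iota\colon\mathcal A(E)\hookrightarrow\text{At}(E)(-\log S)$ the inclusion and $\widetilde h\colon TX(-\log S)\to\mathcal A(E)$, if and only if $h$ has residue $A(x)$ at every $x\in S$.

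To finish I would check that the two splitting conditions correspond under this factorisation. Since $\sigma'$ in \eqref{f2} is the restriction of $\widehat\sigma$ to $\mathcal A(E)$, one has $\sigma'=\widehat\sigma\circ\iota$, whence $\sigma'\circ\widetilde h=\widehat\sigma\circ\iota\circ\widetilde h=\widehat\sigma\circ h$. Consequently $\widetilde h$ splits \eqref{f2} exactly when $h=\iota\circ\widetilde h$ splits \eqref{e4}. Combining the equivalence of the previous paragraph with this one yields a bijection between homomorphisms $\widetilde h\colon TX(-\log S)\to\mathcal A(E)$ satisfying $\sigma'\circ\widetilde h=\text{Id}_{TX(-\log S)}$ and logarithmic connections on $E$ singular over $S$ with residue $A(x)$ for every $x\in S$, which is exactly the assertion of the lemma.

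The only point requiring genuine care, and hence the main (if mild) obstacle, is the fiberwise analysis of $\mathcal A(E)$ in the second paragraph: one must argue that factoring a map out of the locally free sheaf $TX(-\log S)$ through the kernel of $q$ is truly equivalent to the fiber-image conditions at the finitely many points of $S$, rather than to some condition involving the full fibers of $\mathcal A(E)$ (which differ from those of $\text{At}(E)(-\log S)$ over $S$). Everything else is a direct unwinding of Lemma \ref{lem1} together with the definition of the residue.
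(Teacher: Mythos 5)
Your proof is correct and is exactly the direct unwinding of definitions that the paper has in mind: the paper offers no argument beyond the remark that the lemma is a ``straight-forward consequence of the definitions'' of logarithmic connection and residue. Your fiberwise identification of the residue via Lemma \ref{lem1} (reading $\widehat\sigma(x)$ and $j_x$ as the two projections) and the skyscraper-sheaf argument showing that $q\circ h=0$ is equivalent to the fiber condition $h(x)(TX(-\log S)_x)\subset\ell_x$ correctly supply the details the paper leaves implicit.
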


\begin{lemma}\label{lem3}
For each point $x\, \in\, S$, fix a number $\lambda_x\,\in\, \mathbb C$. Let $L$ be a holomorphic line bundle
on $X$. There is a logarithmic connection on $L$ singular over $S$ with residue $\lambda_x$ at
each $x\, \in\, S$ if and only if
\begin{equation}\label{g1}
{\rm degree}(L)+\sum_{x\in S}\lambda_x \,=\, 0\, .
\end{equation}
\end{lemma}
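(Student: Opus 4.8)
The plan is to translate the existence of a logarithmic connection with prescribed residues into the splitting of the sequence \eqref{f2}, and then to reduce that splitting to the vanishing of a single complex number. Since $\text{End}(L)\,=\,\mathcal O_X$ for a line bundle, \eqref{f2} reads
$$
0\, \longrightarrow\, \mathcal O_X(-S)\, \longrightarrow\, \mathcal A(L)\,
\stackrel{\sigma'}{\longrightarrow}\, TX(-\log S)\, \longrightarrow\, 0\, .
$$
By Lemma \ref{lem2} a logarithmic connection on $L$ singular over $S$ with residue $\lambda_x$ at each $x\,\in\, S$ is precisely a holomorphic right inverse $h$ of $\sigma'$, and such an $h$ exists if and only if the extension class $\phi$ of the above sequence in $\text{Ext}^1(TX(-\log S),\, \mathcal O_X(-S))$ vanishes. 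So the whole statement is equivalent to the assertion that $\phi\,=\,0$ if and only if \eqref{g1} holds.

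First I would locate the group containing $\phi$. Using $(TX(-\log S))^*\,=\, K_X\otimes\mathcal O_X(S)$, one gets
$$
\text{Ext}^1(TX(-\log S),\, \mathcal O_X(-S))\,=\, H^1(X,\, \mathcal O_X(-S)\otimes K_X\otimes\mathcal O_X(S))\,=\, H^1(X,\, K_X)\, ,
$$
which is one-dimensional by Serre duality. Hence, after fixing the trace isomorphism $H^1(X,\, K_X)\,\cong\,\mathbb C$, the class $\phi$ is a single number, and it suffices to identify it with $\text{degree}(L)+\sum_{x\in S}\lambda_x$ (up to a nonzero constant, which the computation will show to be $1$).

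Next I would compute $\phi$ with a \v Cech cocycle adapted to $S$. Take the cover given by $U_0\,:=\, X\setminus S$ together with coordinate disks $U_i\ni x_i$ carrying a coordinate $z_i$ with $z_i(x_i)=0$ and a trivialization of $L|_{U_i}$; the $U_i$ are mutually disjoint, so the only overlaps are $U_0\cap U_i$. Since $U_0$ is an open Riemann surface, $L|_{U_0}$ is trivial, so I may take the trivial connection on $U_0$, giving a splitting $s_0$. On $U_i$ take the logarithmic connection $d+\lambda_{x_i}\frac{dz_i}{z_i}$, whose residue at $x_i$ is $\lambda_{x_i}$ by the Poincar\'e-adjunction normalization fixed in Lemma \ref{lem1}, giving a splitting $s_i$. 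The differences $s_i-s_0$ land in $\ker\sigma'\,=\,\mathcal O_X(-S)$ and, under $\text{Hom}(TX(-\log S),\, \mathcal O_X(-S))\,=\, K_X$, are the holomorphic $1$-forms $\lambda_{x_i}\frac{dz_i}{z_i}-\omega_0^{(i)}$ on $U_0\cap U_i$, where $\omega_0^{(i)}$ is the connection form of $s_0$ in the $U_i$-trivialization. These assemble into a \v Cech $1$-cocycle representing $\phi$.

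Finally I would evaluate $\phi$ under $H^1(X,\, K_X)\cong\mathbb C$ by the residue pairing, which on such a cocycle is the sum of local residues $\sum_i\text{Res}_{x_i}(\lambda_{x_i}\frac{dz_i}{z_i}-\omega_0^{(i)})$. The first terms give $\sum_i\lambda_{x_i}$ because $\text{Res}_{x_i}\frac{dz_i}{z_i}=1$. For the second terms, the trivializing frame of $L|_{U_0}$ extends to a meromorphic section of $L$ whose divisor is supported on $S$, whence $\sum_i\text{Res}_{x_i}\omega_0^{(i)}\,=\,-\text{degree}(L)$ by the residue theorem. Combining, $\phi\,=\,\text{degree}(L)+\sum_{x\in S}\lambda_x$, so $\phi$ vanishes exactly when \eqref{g1} holds. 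The main obstacle is this last identification: one must fix the sign and normalization consistently across three inputs — the residue normalization of Lemma \ref{lem1}, the residue-pairing description of the Serre-duality isomorphism $H^1(X,\, K_X)\cong\mathbb C$, and the residue theorem on $X$ — so that the obstruction comes out to be exactly \eqref{g1} rather than a nonzero multiple of it.
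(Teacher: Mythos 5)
Your route is genuinely different from the paper's. The paper proves the converse direction constructively: it reduces to $L_0\,=\,\mathcal{O}_X(\mathrm{degree}(L)\cdot x_0)$ by tensoring with a degree-zero line bundle (which carries a holomorphic connection by Atiyah--Weil), produces a section $\omega_0$ of $K_X\otimes\mathcal{O}_X(S+x_0)$ with the prescribed residues from the cohomology sequence of $0\to K_X\to K_X\otimes\mathcal{O}_X(S+x_0)\to\bigoplus\mathbb{C}_y\to 0$ (the only obstruction being that the residues sum to zero), and writes down $d+\omega_0$; the forward direction is quoted from Ohtsuki. You instead compute the obstruction class of \eqref{f2} in $H^1(X,K_X)\cong\mathbb{C}$ and identify it with $\mathrm{degree}(L)+\sum_{x\in S}\lambda_x$. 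This framework is sound, meshes well with Lemma \ref{lem2} and with Proposition \ref{prop-e} (which is the higher-rank version of exactly this computation), and has the merit of giving both implications at once.

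There is, however, a genuine gap in the one step that produces the $\mathrm{degree}(L)$ term. You assert that the trivializing frame of $L|_{U_0}$ extends to a meromorphic section of $L$ with divisor supported on $S$, and then apply the residue theorem. Neither half of this is available in general: a nowhere-vanishing holomorphic frame on $X\setminus S$ need not be meromorphic at the points of $S$ (multiply any frame by $e^{f}$ with $f$ meromorphic with poles on $S$), and, more seriously, a meromorphic section with divisor supported on $S$ exists only when $L$ lies in the subgroup of $\mathrm{Pic}(X)$ generated by the $\mathcal{O}_X(x)$, $x\in S$, which for $g\geq 1$ is a countable subset of each $\mathrm{Pic}^d(X)$ --- so for most $L$ no choice of frame will do. The identity $\sum_i\mathrm{Res}_{x_i}\omega^{(i)}_0\,=\,-\mathrm{degree}(L)$ that you need is nevertheless true, but it must be justified differently: either interpret $\mathrm{Res}_{x_i}$ as the contour integral $\frac{1}{2\pi i}\oint_{\gamma_i}$ of the merely holomorphic form $-dg_i/g_i$ on the annulus $U_0\cap U_i$ and invoke the winding-number formula computing $\mathrm{degree}(L)$ from the transition functions $g_i$, or enlarge the cover so that the frame on the big open set is the restriction of an arbitrary meromorphic section of $L$ and sum residues over the support of its divisor as well as over $S$. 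With either repair your argument closes; you should also pin down the sign in the \v{C}ech--Dolbeault comparison, although a global nonzero constant would not affect the vanishing statement and hence not the lemma.
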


\begin{proof}
If $D$ is a logarithmic connection on $L$ of the above type, then from \cite[p.~16,
Theorem~3]{Oh} we know that \eqref{g1} holds.

To prove the converse, assume that \eqref{g1} holds. Fix a point $x_0\, \in\, X\setminus S$.
Since the two holomorphic line bundles
$L$ and $L_0\,:=\, {\mathcal O}_X({\rm degree}(L)\cdot x_0)$ differ by tensoring with a
holomorphic line bundle of degree zero, and any holomorphic line bundle of degree zero
on $X$ admits
a nonsingular holomorphic connection (this follows immediately from the
Atiyah--Weil criterion), it suffices to show that $L_0$ admits a
logarithmic connection singular over $S$ with residue $\lambda_x$ at
each $x\, \in\, S$.

For the exact sequence of coherent sheaves
$$
0\, \longrightarrow\, K_X\, \longrightarrow\, K_X\otimes {\mathcal O}_X(S+x_0)
\, \longrightarrow\, \bigoplus_{y\in S\cup \{x_0\}} {\mathbb C}_y
\, \longrightarrow\, 0\, ,
$$
where ${\mathbb C}_y$ is a copy of $\mathbb C$ supported at $y$, let
$$
H^0(X, \, K_X\otimes {\mathcal O}_X(S+x_0))\, \longrightarrow\, 
\bigoplus_{y\in S\cup \{x_0\}} {\mathbb C}_y \, \stackrel{\xi}{\longrightarrow}
\, H^1(X, \, K_X)\, =\, {\mathbb C}
$$
be the exact sequence of cohomologies. We note that the above homomorphism $\xi$ sends
$\bigoplus_{y\in S\cup \{x_0\}} c_y$ to $\sum_{y\in S\cup \{x_0\}} c_y$. Therefore, from
\eqref{g1} it follows that there is a holomorphic section
$$
\omega_0\, \in\, H^0(X,\, K_X\otimes {\mathcal O}_X(S+x_0))
$$
whose residue over each $x\, \in\, S$ is $\lambda_x$ while the residue over $x_0$ is
${\rm degree}(L)\,=\, -\sum_{x\in S}\lambda_x$. Now the logarithmic connection $d+\omega_0$
on ${\mathcal O}_X({\rm degree}(L)\cdot x_0)\,=\, L_0$, where $d$ is the usual de Rham differential,
has the required residues.
\end{proof}

\section{The extension class}

Isomorphism classes of extensions of the line bundle $TX(-\log S)$ by the vector 
bundle $\text{End}(E)\otimes {\mathcal O}_X(-S)$ are parametrized by
$$
H^1(X,\, \text{Hom}(TX(-\log S),\, \text{End}(E)\otimes
{\mathcal O}_X(-S)))\,=\, H^1(X,\, \text{End}(E)\otimes K_X)\, .
$$
Consider the extension in \eqref{f2}. Let
\begin{equation}\label{a1}
\phi^A_E\, \in\, H^1(X,\, \text{Hom}(TX(-\log S),\, \text{End}(E)\otimes
{\mathcal O}_X(-S)))\,=\, H^1(X,\, \text{End}(E)\otimes K_X)
\end{equation}
be the corresponding cohomology class. Our aim in this section is to determine $\phi^A_E$.

Take any point $y\, \in\, X$. We will construct a homomorphism
\begin{equation}\label{a2}
\gamma_y\, :\, \text{End}(E)_y\, \longrightarrow\, H^1(X,\, \text{End}(E)\otimes K_X)\, .
\end{equation}
For this, consider the short exact sequence of sheaves on $X$
$$
0\, \longrightarrow\, {\mathcal O}_X\, \longrightarrow\,
{\mathcal O}_X(y) \,{\longrightarrow}\, {\mathcal O}_X(y)_y\,=\, T_yX
\, \longrightarrow\, 0
$$
(recall that the Poincar\'e adjunction formula identifies the fiber
${\mathcal O}_X(y)_y$ with the fiber $T_yX$ of the holomorphic tangent bundle).
Tensoring this exact sequence with $\text{End}(E)\otimes K_X$, the following
short exact sequence is obtained
$$
0\, \longrightarrow\, \text{End}(E)\otimes K_X\, \longrightarrow\,
\text{End}(E)\otimes K_X\otimes{\mathcal O}_X(y) \, {\longrightarrow}\,\text{End}(E)_y
\, \longrightarrow\, 0\, .
$$
The homomorphism $\gamma_y$ in \eqref{a2} is the one occurring in the long exact
sequence of cohomologies associated to the above short exact sequence.

Consider the Atiyah exact sequence in \eqref{at}. Let
\begin{equation}\label{a3}
\phi^0_E\, \in\, H^1(X,\, \text{Hom}(TX,\, \text{End}(E)))\,=\,
H^1(X,\, \text{End}(E)\otimes K_X)
\end{equation}
be the extension class for it.

\begin{proposition}\label{prop-e}
The cohomology class $\phi^A_E$ in \eqref{a1} coincides with
$$
\phi^0_E+ \sum_{x\in S} \gamma_x(A(x))\, \in\, H^1(X,\, {\rm End}(E)\otimes K_X)\, ,
$$
where $\phi^0_E$ and $\gamma_x$ are constructed in \eqref{a3} and \eqref{a2}
respectively.
\end{proposition}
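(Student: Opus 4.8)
The plan is to represent all three cohomology classes by \v{C}ech $1$-cocycles on a single well-chosen open cover of $X$ and then compare the cocycles entrywise. First I fix an open cover with the following structure: for each $x\in S$ take a coordinate disc $U_x$ with holomorphic coordinate $z_x$, $z_x(x)=0$, chosen small enough that the discs are pairwise disjoint and $E|_{U_x}$ is holomorphically trivial; cover the complement $X\setminus S$ by Stein open subsets $U_\alpha$, on each of which $E$ admits a holomorphic connection. Since every open set and every finite intersection of this cover is Stein, and $\text{End}(E)\otimes K_X$ is locally free, \v{C}ech cohomology for this cover computes $H^1(X,\,\text{End}(E)\otimes K_X)$. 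Throughout I use the canonical identification $\text{Hom}(TX(-\log S),\,\text{End}(E)\otimes{\mathcal O}_X(-S))\,=\,\text{Hom}(TX,\,\text{End}(E))\,=\,\text{End}(E)\otimes K_X$ that cancels the twist by ${\mathcal O}_X(-S)$, under which both $\phi^A_E$ and $\phi^0_E$ lie in $H^1(X,\,\text{End}(E)\otimes K_X)$ and can be compared directly.

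Next I record cocycle representatives. For $\phi^0_E$: choose a local holomorphic connection $\nabla_\alpha$ on each $U_\alpha\subset X\setminus S$, and on each disc $U_x$ take the trivial connection $\nabla^0_x=d$ in the fixed trivialization of $E|_{U_x}$. The differences $\nabla_\alpha-\nabla_\beta$, read as sections of $\text{End}(E)\otimes K_X$, form the standard \v{C}ech cocycle representing the Atiyah class $\phi^0_E$. For $\phi^A_E$: by Lemmas \ref{lem1} and \ref{lem2}, a local splitting of \eqref{f2} is precisely a local logarithmic connection whose residue at the point of $S$ it contains equals the prescribed $A(x)$. Over the sets $U_\alpha\subset X\setminus S$, where \eqref{f2} coincides with the Atiyah sequence, I reuse the same $\nabla_\alpha$; over $U_x$ I take $\nabla^A_x:=d+\widetilde A(x)\tfrac{dz_x}{z_x}$, where $\widetilde A(x)$ is the constant extension of $A(x)$ in the chosen trivialization. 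One checks that $\nabla^A_x$ has residue $A(x)$ at $x$, so that its value $\nabla^A_x(x)(1)=(A(x),1)$ lands in the line $\ell_x$ of the decomposition in Lemma \ref{lem1}; hence $\nabla^A_x$ is a genuine splitting of \eqref{f2} over $U_x$. The resulting differences form a \v{C}ech cocycle representing $\phi^A_E$.

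Now I subtract the two cocycles. On every overlap contained in $X\setminus S$ the representatives agree, so the difference vanishes there. The two cocycles differ only through the replacement of $\nabla^0_x$ by $\nabla^A_x$ on the discs, and since $\nabla^A_x-\nabla^0_x=\widetilde A(x)\tfrac{dz_x}{z_x}$ (a holomorphic section of $\text{End}(E)\otimes K_X$ on each overlap $U_\alpha\cap U_x$, which avoids $x$), the difference cocycle is supported on the overlaps with the points of $S$, with entry $\widetilde A(x)\tfrac{dz_x}{z_x}$ near $x$ (up to the sign fixed by the chosen \v{C}ech convention). It remains to identify this with $\sum_{x\in S}\gamma_x(A(x))$. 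Unwinding the definition of the connecting homomorphism $\gamma_x$ from \eqref{a2}: the element $A(x)\in\text{End}(E)_x$ lifts over $U_x$ to the section $\widetilde A(x)\tfrac{dz_x}{z_x}$ of $\text{End}(E)\otimes K_X\otimes{\mathcal O}_X(x)$ --- this maps to $A(x)$ exactly because the Poincar\'e adjunction normalization sends $\tfrac{dz_x}{z_x}$ to $1$ --- and lifts to $0$ over $X\setminus\{x\}$; the coboundary of these lifts is the cocycle with entry $\widetilde A(x)\tfrac{dz_x}{z_x}$ on $U_x\cap(X\setminus S)$ (the overlaps $U_x\cap U_{x'}$ are empty for $x'\neq x$). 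This is precisely the $x$-summand found above, so summing over $x\in S$ gives the asserted equality.

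The main thing to watch is bookkeeping of conventions rather than any conceptual difficulty: one must fix a single orientation convention for the \v{C}ech differential together with a single sign convention for the extension class of a short exact sequence, and then verify that under the identification $\text{Hom}(TX(-\log S),\text{End}(E)\otimes{\mathcal O}_X(-S))=\text{End}(E)\otimes K_X$ and the Poincar\'e adjunction normalizations the two cocycles become \emph{literally} equal, not merely proportional. The one genuinely structural step --- where the construction of ${\mathcal A}(E)$ via the lines $\ell_x$ and Lemma \ref{lem1} are indispensable --- is the verification that $\nabla^A_x$ defines a splitting of \eqref{f2}, i.e. that its value at $x$ lies in $\ell_x$; everything else reduces to the explicit local comparison described above.
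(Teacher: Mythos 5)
Your proof is correct, but it follows a genuinely different route from the paper's. You work entirely with \v{C}ech cocycles: you build a Leray cover adapted to $S$, represent $\phi^0_E$ and $\phi^A_E$ by differences of explicit local splittings (local holomorphic connections away from $S$, and the model connection $d+\widetilde A(x)\frac{dz_x}{z_x}$ on a disc about each $x\in S$), observe that the difference cocycle is supported on the overlaps $U_\alpha\cap U_x$ with entry $\widetilde A(x)\frac{dz_x}{z_x}$, and identify that with the connecting homomorphism $\gamma_x$ applied to $A(x)$ by unwinding the coboundary for the sequence defining \eqref{a2}. The paper instead passes through Serre duality $H^1(X,\mathrm{End}(E)\otimes K_X)\cong H^0(X,\mathrm{End}(E))^*$ and compares the three classes only after pairing against a global endomorphism $\beta$: it represents $\phi^0_E$ and $\phi^A_E$ by curvatures of $C^\infty$ splittings (chosen holomorphic near $S$ in the logarithmic case) and uses the formula $\widetilde\gamma_y(\alpha)(\beta)=\mathrm{trace}(\alpha\circ\beta(y))$, the identity then following from a residue/Stokes computation that the paper leaves implicit. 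Your approach is more elementary and more complete as a proof of the cocycle identity itself --- it yields literal equality of representatives rather than equality of pairings, and does not need duality at all --- while the paper's Dolbeault--Serre setup has the advantage that the explicit trace formula for $\widetilde\gamma_y$ is exactly what gets reused in the proof of Proposition \ref{prop1} (to kill $\widetilde\gamma_x(A(x))(N)$ for nilpotent $N$). Your own flags about sign and normalization conventions are the only places where care is needed, and the key structural step you isolate --- that $d+\widetilde A(x)\frac{dz_x}{z_x}$ has value $(A(x),1)\in\ell_x$ at $x$ and hence splits \eqref{f2} locally --- is verified correctly via Lemmas \ref{lem1} and \ref{lem2}.
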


\begin{proof}
We will give an alternative description of the homomorphism $\gamma_y$ in \eqref{a2}.
First note that Serre duality says that
\begin{equation}\label{sd}
H^1(X,\, \text{End}(E)\otimes K_X)\,=\, H^0(X,\, \text{End}(E))^*\, .
\end{equation}
Let
\begin{equation}\label{sd2}
\widetilde{\gamma}_y\, :\, \text{End}(E)_y\, \longrightarrow\, 
H^0(X,\, \text{End}(E))^*
\end{equation}
be the homomorphism produced by $\gamma_y$ using
the above Serre duality. For any $\alpha\, \in\, \text{End}(E)_y$ and
$\beta\, \in\, H^0(X,\, \text{End}(E))$, it can be checked that
\begin{equation}\label{alpha}
\widetilde{\gamma}_y(\alpha)(\beta)\, =\, \text{trace}
(\alpha\circ (\beta(y)))\, .
\end{equation}

Next we recall a Dolbeault type description of $\phi^0_E$.
Let
\begin{equation}\label{0c}
\widetilde{\phi}^0_E\, \in\, H^0(X,\, \text{End}(E))^*
\end{equation}
be the element corresponding
to ${\phi}^0_E$ in \eqref{a3} by the isomorphism in \eqref{sd}.

A $C^\infty$ homomorphism $$H\, :\, TX\, \longrightarrow\, \text{At}(E)$$
such that ${\widetilde\sigma}\circ H\,=\, \text{Id}_{TX}$ (see \eqref{at}
for $\widetilde\sigma$) defines a $C^\infty$ connection on $E$ whose
$(0\, ,1)$--component coincides
with the Dolbeault operator on the holomorphic vector bundle $E$. Let ${\mathcal K}(H)$
be the curvature of the $C^\infty$ connection on $E$ given by $H$; it
is a $(1,1)$--form with values in $\text{End}(E)$. Therefore, ${\mathcal K}(H)$
represents an element of Dolbeault cohomology $H^1(X,\, \text{End}(E)\otimes K_X)$.
This element of $H^1(X,\, \text{End}(E)\otimes K_X)$ represented by ${\mathcal K}(H)$
coincides with $\phi^0_E$ in \eqref{a3}. Consequently, for any
$\beta\, \in\, H^0(X,\, \text{End}(E))$, we have
$$
\widetilde{\phi}^0_E(\beta)\,=\, \int_X \text{trace}({\mathcal K}(H)\circ \beta)\, ,
$$
where $\widetilde{\phi}^0_E$ is defined in \eqref{0c}.

The cohomology class $\phi^A_E$ has a similar description.
Using \eqref{sd}, the cohomology class $\phi^A_E$ in 
\eqref{a1} defines an element
\begin{equation}\label{ph}
\widetilde{\phi}^A_E\, \in\, H^0(X,\, \text{End}(E))^*\, .
\end{equation}
Let
$$
H'\, :\, TX(-\log S)\, \longrightarrow\,\mathcal{A}(E)
$$
be a $C^\infty$ homomorphism such that
\begin{itemize}
\item $H'$ is holomorphic around the points of $S$, and

\item $\sigma'\circ H'\, =\, \text{Id}_{TX(-\log S)}$, where $\sigma'$ is the
homomorphism in \eqref{f2}.
\end{itemize}
Let ${\mathcal K}(H')$ be the curvature of the singular $C^\infty$ connection on
$E$ defined by $H'$. We note that ${\mathcal K}(H')$ is a $C^\infty$ two-form on $X$
with values in $\text{End}(E)$; this is because ${\mathcal K}(H')$ vanishes in a
neighborhood of $S$, so it is an $\text{End}(E)$ valued $C^\infty$ two-form on
entire $X$. The Dolbeault cohomology class in $H^1(X,\, \text{End}(E)\otimes K_X)$
represented by the $\text{End}(E)$--valued $(1,\, 1)$--form
${\mathcal K}(H')$ on $X$ coincides with $\phi^A_E$ in \eqref{a1}. Consequently,
for any $\beta\, \in\, H^0(X,\, \text{End}(E))$, we have
$$
\widetilde{\phi}^A_E(\beta)\,=\, \int_X \text{trace}({\mathcal K}(H')\circ \beta)\, .
$$
The proposition is a straight-forward consequence of the above descriptions
of $\gamma_y$, $\phi^0_E$ and $\phi^A_E$.
\end{proof}

\begin{lemma}\label{ls}
Let $E$ be a simple vector bundle, meaning $H^0(X,\, {\rm End}(E))\,=\,
{\mathbb C}\cdot {\rm Id}_E$. Then the following two statements are equivalent:
\begin{enumerate}
\item There is a logarithmic connection on $E$ singular over $S$
with residue $A(x)$ for every $x\,\in\, S$.

\item The collection of endomorphisms $A(x)$, $x\, \in\, S$, satisfy the condition
$$
{\rm degree}(E)+\sum_{x\in S} {\rm trace}(A(x))\,=\, 0\, .
$$
\end{enumerate}
\end{lemma}

\begin{proof}
If $E$ admits a logarithmic connection singular over $S$
such that residue is $A(x)$ for every $x\,\in\, S$, then
${\rm degree}(E)+\sum_{x\in S} {\rm trace}(A(x))\,=\, 0$ \cite[p.~16,
Theorem~3]{Oh}.

To prove the converse, assume that
\begin{equation}\label{ae}
{\rm degree}(E)+\sum_{x\in S} {\rm trace}(A(x))\,=\, 0\, .
\end{equation}
In Lemma \ref{lem3}, set $L\,=\, \bigwedge^{r} E$, where $r\, =\, \text{rank}
(E)$, and $\lambda_x\,=\, \text{trace}(A(x))$ for every $x\, \in\, S$. Let
$$\widetilde{\phi}_L\, \in\, H^0(X,\, \text{End}(L))^*\,=\, {\mathbb C}^*$$ be the
class for $(L\, , \{\lambda_x\}_{x\in S})$ (as in \eqref{ph}). It is straight-forward
to check that
$$
\widetilde{\phi}^A_E(\text{Id}_E)\,=\, \widetilde{\phi}_L(1)
$$
(see \eqref{ph} for $\widetilde{\phi}^A_E$).
In view of \eqref{ae}, from Lemma \ref{lem3} it follows that $L$ has a logarithmic
connection with residue $\lambda_x$ at
every $x\, \in\, S$. Therefore, we have $\widetilde{\phi}_L(1)\,=\, 0$.
Hence from the above equality it follows that $\widetilde{\phi}^A_E(\text{Id}_E)\,=\,
0$. Now we conclude that $\widetilde{\phi}^A_E\,=\, 0$, because $E$ is simple.
\end{proof}

Since all stable vector bundles are simple, Lemma \ref{ls} has the following
corollary.

\begin{corollary}\label{cor1}
Let $E$ be a stable vector bundle. Then the following two statements are equivalent:
\begin{enumerate}
\item There is a logarithmic connection on $E$ singular over $S$
with residue $A(x)$ for every $x\,\in\, S$.

\item The collection of endomorphisms $A(x)$, $x\, \in\, S$, satisfy the condition
$$
{\rm degree}(E)+\sum_{x\in S} {\rm trace}(A(x))\,=\, 0\, .
$$
\end{enumerate}
\end{corollary}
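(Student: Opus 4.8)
\section*{Proof proposal}

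The plan is to obtain Corollary \ref{cor1} as an immediate consequence of Lemma \ref{ls}. Indeed, the two numbered statements in the corollary are verbatim the two numbered statements of Lemma \ref{ls}, so the entire content of the corollary is the assertion that Lemma \ref{ls} applies to stable bundles. The only ingredient to supply is therefore the classical fact that a stable vector bundle $E$ on $X$ is simple, i.e.\ $H^0(X,\, \text{End}(E))\,=\, {\mathbb C}\cdot \text{Id}_E$; once this is recorded the proof is complete. Accordingly I would devote the proof to recalling why stability forces simplicity, and then invoke Lemma \ref{ls}.

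To show that a stable $E$ is simple, I would take an arbitrary nonzero $v\,\in\, H^0(X,\, \text{End}(E))$ and argue that it is a scalar multiple of $\text{Id}_E$. The coefficients of the characteristic polynomial of $v$ are global holomorphic functions on the compact connected Riemann surface $X$, hence constant; choosing a root $\lambda$ of this polynomial and setting $w\,:=\, v-\lambda\,\text{Id}_E$, the determinant $\det(w)$ (a section of $\text{Hom}(\det E,\, \det E)\,=\,{\mathcal O}_X$) vanishes identically, so $w$ is not an isomorphism. Now the standard slope comparison applies: if $w\,\neq\, 0$ had nontrivial kernel, then $\text{im}(w)\,=\, E/\ker(w)$ would be a proper nonzero quotient of the stable bundle $E$, forcing $\mu(\text{im}(w))\,>\,\mu(E)$, while $\text{im}(w)\,\subset\, E$ would be a subsheaf of rank strictly less than $\text{rank}(E)$, forcing $\mu(\text{im}(w))\,<\,\mu(E)$ --- a contradiction. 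Hence $w$ is injective, so $\text{im}(w)\,\cong\, E$ is a full-rank subsheaf of $E$ with $\deg(\text{im}(w))\,=\,\deg(E)$, which means the torsion quotient $E/\text{im}(w)$ has length zero and $w$ is an isomorphism, contradicting $\det(w)\,\equiv\, 0$. Therefore $w\,=\, 0$, that is, $v\,=\,\lambda\,\text{Id}_E$, and $E$ is simple.

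With simplicity established, Lemma \ref{ls} applies directly to $E$ and yields the desired equivalence between the existence of a logarithmic connection on $E$ singular over $S$ with residue $A(x)$ at each $x\,\in\, S$ and the numerical condition $\text{degree}(E)+\sum_{x\in S}\text{trace}(A(x))\,=\, 0$. There is essentially no obstacle here: the corollary is a formal specialization of the lemma, and the only step requiring any verification is the slope argument for stable $\Rightarrow$ simple, which is entirely textbook (one could alternatively simply cite a standard reference for this implication rather than reproduce the argument).
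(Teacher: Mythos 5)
Your proposal is correct and follows exactly the paper's route: the authors also deduce the corollary immediately from Lemma \ref{ls} via the standard fact that stable bundles are simple (which they simply cite rather than prove). Your included slope argument for stable $\Rightarrow$ simple is sound and just makes explicit what the paper takes as known.
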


\section{Rigid endomorphisms and logarithmic connections on vector bundles}

\subsection{Rigid endomorphisms}

The vector space
$$
H^0(X,\, \text{End}(E))\,=\, H^0(X,\, E\otimes E^*)
$$
is a Lie algebra for the operation $[v\, ,w]\,=\, v\circ w- w\circ v$. For any
point $x\, \in\, X$, let
$$
s_x\, :=\, H^0(X,\, \text{End}(E))\,\longrightarrow\, \text{End}(E_x)
\,=\, \text{End}(E)_x\, ,\ \ v \,\longmapsto\, v(x)\, ,
$$
be the evaluation map. Now define the Lie subalgebra
$$
I(x)\,:=\, \text{image}(s_x) \, \subset\, \text{End}(E_x)\, .
$$
An element $\alpha\, \in\, \text{End}(E_x)$ will be called \textit{rigid} if
$$
[\alpha\, , I(x)]\,=\, 0\, ,
$$
meaning $\alpha\circ v(x)\,=\, v(x)\circ\alpha$ for all $v\, \in\, H^0(X,\, \text{End}(E))$.

Note that if $H^0(X,\, \text{End}(E))\,=\, {\mathbb C}\cdot \text{Id}_E$, then all 
elements of $\text{End}(E_x)$ are rigid.

\subsection{Criterion for logarithmic connections on indecomposable bundles}

A holomorphic subbundle $F$ of a holomorphic vector bundle $V$ on $X$ is called a 
\textit{direct summand} of $V$ if there is another holomorphic subbundle $F'$ of $V$ such 
that the natural homomorphism
$$
F\oplus F'\, \longrightarrow\, V
$$
is an isomorphism. A holomorphic vector bundle $V$ is called
\textit{indecomposable} if there is no
direct summand $F$ of it such that $0\, <\, \text{rank}(F)\, <\, \text{rank}(V)$.

As before, $S$ is an effective reduced divisor on $X$, and $E$ is a holomorphic 
vector bundle on $X$.

\begin{proposition}\label{prop1}
Assume that $E$ is indecomposable. For each point $x\in\, S$, fix a rigid endomorphism
$A(x) \, \in\, {\rm End}(E_x)$. Then the following are equivalent:
\begin{enumerate}
\item There is a logarithmic connection on $E$ singular over $S$
such that the residue is $A(x)$ for every $x\,\in\, S$.

\item The collection $A(x)$, $x\, \in\, S$, satisfy the condition
\begin{equation}\label{e3}
{\rm degree}(E)+\sum_{x\in S} {\rm trace}(A(x))\,=\, 0\, .
\end{equation}
\end{enumerate}
\end{proposition}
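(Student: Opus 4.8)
The plan is to convert the existence of the desired logarithmic connection into the vanishing of the extension class $\phi^A_E$ of \eqref{a1}, and then to test that class against $H^0(X,\,\text{End}(E))$ using the local structure of the endomorphism algebra of an indecomposable bundle together with the rigidity hypothesis. The implication (1)$\Rightarrow$(2) is the easy direction: a logarithmic connection on $E$ with residues $A(x)$ induces one on the determinant line bundle $\det E$, whose residue at $x$ is $\text{trace}(A(x))$, so \cite[p.~16, Theorem~3]{Oh} gives \eqref{e3}, exactly as in the proof of Lemma \ref{ls}. For the converse I would argue as follows. By Lemma \ref{lem2} a connection with the prescribed residues is precisely a splitting of \eqref{f2}, so one exists if and only if $\phi^A_E\,=\,0$; via the Serre duality \eqref{sd} this is equivalent to the vanishing of the functional $\widetilde\phi^A_E\in H^0(X,\,\text{End}(E))^*$ of \eqref{ph}, i.e.\ to $\widetilde\phi^A_E(\beta)\,=\,0$ for every $\beta\in H^0(X,\,\text{End}(E))$. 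By Proposition \ref{prop-e} together with \eqref{alpha},
$$
\widetilde\phi^A_E(\beta)\,=\,\widetilde\phi^0_E(\beta)+\sum_{x\in S}\text{trace}(A(x)\circ\beta(x))\, .
$$

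Since $E$ is indecomposable, I would invoke Atiyah's theorem \cite{At} that $H^0(X,\,\text{End}(E))$ is a local algebra, decomposing as $\mathbb C\cdot\text{Id}_E\oplus\mathcal N$, where $\mathcal N$ is the ideal of nilpotent endomorphisms. By linearity it then suffices to check $\widetilde\phi^A_E$ on $\text{Id}_E$ and on each nilpotent $\nu\in\mathcal N$. On $\text{Id}_E$ the computation is exactly that of Lemma \ref{ls}: the determinant reduction yields $\widetilde\phi^A_E(\text{Id}_E)=\widetilde\phi_{\det E}(1)$, which vanishes by \eqref{e3} and Lemma \ref{lem3}. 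This leaves the nilpotent directions, where both hypotheses enter.

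For a nilpotent $\nu$ I would treat the two summands separately. For the residue contribution, rigidity of $A(x)$ forces $A(x)$ to commute with $\nu(x)\in I(x)$; moreover $\nu(x)$ is nilpotent, as $(\nu(x))^m=(\nu^m)(x)=0$. A product of two commuting endomorphisms one of which is nilpotent is again nilpotent, so $A(x)\circ\nu(x)$ is nilpotent and $\text{trace}(A(x)\circ\nu(x))=0$; thus the entire residue sum drops out, and I am reduced to proving $\widetilde\phi^0_E(\nu)=0$. For this I would use additivity of the pairing $\beta\mapsto\widetilde\phi^0_E(\beta)=\int_X\text{trace}(\mathcal K(H)\circ\beta)$ (the Dolbeault description from the proof of Proposition \ref{prop-e}) along a holomorphic flag preserved by $\nu$. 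Taking $F_i$ to be the saturation of $\ker(\nu^i)$, these are holomorphic subbundles with locally free quotients on the smooth curve $X$, $\nu$ preserves the flag $0=F_0\subset\cdots\subset F_m=E$, and the endomorphism induced by $\nu$ on each graded piece $F_i/F_{i-1}$ is zero because $\nu(F_i)\subseteq F_{i-1}$. Choosing a $C^\infty$ connection compatible with the Dolbeault operator and preserving the flag makes both $\nu$ and the curvature block upper triangular, so $\text{trace}(\mathcal K(H)\circ\nu)$ equals the sum of the corresponding traces on the graded pieces, each of which vanishes. Hence $\widetilde\phi^0_E(\nu)=0$, and combining the three computations gives $\widetilde\phi^A_E=0$.

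The main obstacle is exactly the vanishing $\widetilde\phi^0_E(\nu)=0$ for nilpotent $\nu$. Unlike the residue term, it is not a consequence of rigidity but reflects the fact that, for indecomposable $E$, the Atiyah pairing restricted to $H^0(X,\,\text{End}(E))$ is proportional to the trace; establishing this genuinely requires the filtration-plus-additivity argument above. The role of the rigidity hypothesis is precisely to annihilate the residue contribution in the nilpotent directions, and, combined with indecomposability forcing every global endomorphism to be a scalar plus a nilpotent, this is what reduces the obstruction to the single scalar condition \eqref{e3}.
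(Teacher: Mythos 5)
Your proof is correct and follows essentially the same route as the paper: reduce to the vanishing of $\widetilde{\phi}^A_E$, split $H^0(X,\,\text{End}(E))$ into $\mathbb C\cdot\text{Id}_E$ plus nilpotents via Atiyah's structure theorem for indecomposable bundles, handle $\text{Id}_E$ through the determinant as in Lemma \ref{ls}, and kill the residue terms in the nilpotent directions using rigidity and $\text{trace}(A(x)\circ N(x))=0$. The only difference is that for the vanishing $\widetilde{\phi}^0_E(N)=0$ you supply a (correct) filtration-by-saturations argument, whereas the paper simply cites \cite[p.~202, Proposition~18(ii)]{At}.
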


\begin{proof}
As noted in the proof of Lemma \ref{ls}, if $E$ admits such a logarithmic
connection, then \eqref{e3} holds.

To prove the converse, assume that \eqref{e3} holds. In view of Lemma \ref{lem2}, we
need to show that the short exact sequence in \eqref{f2} splits, or in other
words, $\widetilde{\phi}^A_E$ in \eqref{ph} is the zero homomorphism.

Since $E$ is indecomposable, all elements of $H^0(X,\, \text{End}(E))$ are of the form 
$c\cdot \text{Id}_E + N$, where $c\, \in\, \mathbb C$ and $N$ is a nilpotent 
endomorphism of $E$ \cite[p.~201, Proposition~15]{At}. In the proof of Lemma \ref{ls} 
it was shown that \eqref{e3} implies that $\widetilde{\phi}^A_E(\text{Id}_E)\,=\, 0$.

Let $N\,\in\, H^0(X,\, \text{End}(E))$ be a nilpotent endomorphism. It is known
that $$\widetilde{\phi}^0_E(N)\,=\, 0$$ \cite[p.~202, Proposition~18(ii)]{At}
(see \eqref{0c} for $\widetilde{\phi}^0_E$). Therefore,
in view of Proposition \ref{prop-e}, to prove that $\widetilde{\phi}^A_E(N)\,=\, 0$,
it suffices to show that
\begin{equation}\label{sh}
\widetilde{\gamma}_x(A(x))(N) \,=\, 0
\end{equation}
for all $x\, \in\, S$, where $\widetilde{\gamma}_x$ is constructed in \eqref{sd2}.

Since $N(x)\,\in\, \text{End}(E_x)$ is nilpotent, and $A(x)$ commutes with $N(x)$, it
follows that $A(x)\circ N(x)$ is also nilpotent. Hence we have
$\text{trace}(A(x)\circ N(x))\,=\, 0$. In view of this, \eqref{sh}
follows from \eqref{alpha}.
\end{proof}

\subsection{Criterion for logarithmic connections with rigid residues}

Let
\begin{equation}\label{d}
E\,=\, \bigoplus_{i=1}^n E^i
\end{equation}
be a decomposition of $E$ into a direct sum of indecomposable vector bundles.
If
$$
E\,=\, \bigoplus_{i=1}^{n'} F^i
$$
is another such decomposition, then $n\,=\, n'$ and there is permutation $\delta$ of
$\{1 \, ,\cdots\, , n\}$ such that $E^i$ is holomorphically isomorphic to $F^{\delta(i)
}$ for all $1\, \leq\, i\, \leq\, n$ \cite[p.~315, Theorem~2]{At1}. It is known that
such a decomposition is obtained by choosing a maximal torus in the algebraic group
$\text{Aut}(E)$ consisting of the holomorphic automorphisms of $E$ \cite{At1}. Let
$$
{\mathcal T}\,=\, ({\mathbb G}_m)^n\, \subset\, \text{Aut}(E)
$$
be the maximal torus corresponding to the decomposition in \eqref{d}, where
${\mathbb G}_m\,=\, {\mathbb C}\setminus \{0\}$ is the multiplicative group. So
the action of any $(\mu_1\, ,\cdots\, , \mu_n)\,\in\, ({\mathbb G}_m)^n$ on $E$
sends any $(w_1\, ,\cdots\, , w_n)\, \in\, \bigoplus_{i=1}^n E^i$ to
$(\mu_1 w_1\, ,\cdots\, , \mu_n w_n)\, \in\, \bigoplus_{i=1}^n E^i$.

For each point $x\in\, S$, fix a rigid endomorphism
$A(x) \, \in\, \text{End}(E_x)$. Since $A(x)$ commutes with $\mathcal T$, it follows
that
$$
A(x)(E^i_x)\, \subset\, E^i_x
$$
for all $1\, \leq\, i\, \leq\, n$ and all $x\, \in\, S$. The endomorphism
$$
A(x)\vert_{E^i_x}\, :\, E^i_x\,\longrightarrow\, E^i_x
$$
will be denoted by $A^i_x$.

If $D\, :\, E\, \longrightarrow\, E\otimes K_X\otimes{\mathcal O}_X(S)$ is a logarithmic 
connection on $E$ singular over $S$ with residue $A(x)$ at every $x\, \in\, S$,
then the composition
$$
E^i\, \hookrightarrow\, E\, \stackrel{D}{\longrightarrow}\, E\otimes
K_X\otimes{\mathcal O}_X(S) \, \stackrel{q^i\otimes {\rm Id}}{\longrightarrow}\, E^i\otimes
K_X\otimes{\mathcal O}_X(S)\, ,
$$
where $q^i\, :\, E\, \longrightarrow\,E^i$ is the natural projection obtained from
\eqref{d}, is a logarithmic connection on $E^i$ singular over $S$ with residue $A^i_x$
at every $x\, \in\, S$. Conversely, if for every $1\, \leq\, i\,\leq\, n$
$$
D^i\, :\, E^i\, \longrightarrow\, E^i\otimes K_X\otimes{\mathcal O}_X(S)
$$
is a logarithmic connection on $E^i\,\subset\, \bigoplus_{i=1}^n E^i$ singular over $S$
with residue $A^i_x$ at every $x\, \in\, S$, then $\bigoplus_{i=1}^n D^i$ is a logarithmic
connection on $E$ singular over $S$ with residue $A(x)$ at every $x\, \in\, S$.

Note that every direct summand of $E$ is a part of a decomposition of $E$ into a 
direct sum of indecomposable vector bundles. Now using Proposition \ref{prop1} we 
have the following:

\begin{theorem}\label{thm1}
For each point $x\in\, S$, fix a rigid endomorphism
$A(x) \, \in\, {\rm End}(E_x)$. Then the following hold:
\begin{enumerate}
\item For every direct summand $F\, \subset\, E$,
$$A(x)(F_x)\, \subset\, F_x$$ for every $x\, \in\, S$.

\item The vector bundle $E$ admits a logarithmic
connection on $E$ singular over $S$, with residue $A(x)$ at every $x\, \in\, S$, if and
only if for every direct summand $F\, \subset\, E$,
$$
{\rm degree}(F)+\sum_{x\in S} {\rm trace}(A(x)\vert_{F_x})\,=\, 0\, .
$$
\end{enumerate}
\end{theorem}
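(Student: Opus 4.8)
The plan is to reduce everything to the indecomposable case established in Proposition~\ref{prop1}, using the Krull--Schmidt theorem \cite[p.~315, Theorem~2]{At1} together with the two observations already recorded in this subsection: that a rigid $A(x)$ commutes with the maximal torus $\mathcal T$ attached to any decomposition into indecomposables, and that logarithmic connections add up over such decompositions.

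First I would prove (1). Given an arbitrary direct summand $F\,\subset\, E$, I choose a complement $F'$ with $E\,=\, F\oplus F'$ and decompose both $F$ and $F'$ into indecomposables; this realizes $F$ as a partial sum in a decomposition $E\,=\,\bigoplus_{i=1}^n E^i$ of $E$ into indecomposable bundles. The projections onto the summands of such a decomposition lie in $H^0(X,\,\text{End}(E))$, so rigidity forces $A(x)$ to commute with each of them at the fiber over $x$; as recorded above, this yields $A(x)(E^i_x)\,\subset\, E^i_x$ for every $i$, and summing over the indices comprising $F$ gives $A(x)(F_x)\,\subset\, F_x$. In particular $A(x)$ is block diagonal with respect to $E_x\,=\, F_x\oplus F'_x$, a fact I will reuse.

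For the forward implication in (2), let $D$ be a logarithmic connection on $E$ singular over $S$ with residue $A(x)$ at each $x\,\in\, S$. Given a direct summand $F$ with complement $F'$, write $q^F\, :\, E\,\to\, F$ and $\iota^F\, :\, F\,\to\, E$ for the projection and inclusion. Exactly as in the displayed construction for the summands $E^i$ above, $(q^F\otimes\text{Id})\circ D\circ\iota^F$ is a logarithmic connection on $F$ singular over $S$, and a direct check using the block-diagonality from (1) shows that its residue at $x$ is $A(x)\vert_{F_x}$. Applying the residue identity \cite[p.~16, Theorem~3]{Oh} to this connection on $F$ then gives $\text{degree}(F)+\sum_{x\in S}\text{trace}(A(x)\vert_{F_x})\,=\, 0$, as required.

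For the converse in (2), fix a decomposition $E\,=\,\bigoplus_{i=1}^n E^i$ into indecomposables and set $A^i_x\,=\, A(x)\vert_{E^i_x}$. Since each $E^i$ is a direct summand of $E$, the hypothesis gives $\text{degree}(E^i)+\sum_{x\in S}\text{trace}(A^i_x)\,=\, 0$. The step I expect to be the crux is checking that $A^i_x$ is itself a rigid endomorphism of $E^i$, so that Proposition~\ref{prop1} applies to $E^i$: for $v\,\in\, H^0(X,\,\text{End}(E^i))$ I would extend it to $\widetilde v\,=\,\iota^i\circ v\circ q^i\,\in\, H^0(X,\,\text{End}(E))$ and restrict the relation $A(x)\circ\widetilde v(x)\,=\,\widetilde v(x)\circ A(x)$ to the $A(x)$-invariant subspace $E^i_x$, which shows that $A^i_x$ commutes with $v(x)$. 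Proposition~\ref{prop1} then yields a logarithmic connection $D^i$ on $E^i$ singular over $S$ with residue $A^i_x$; since $A(x)\,=\,\bigoplus_{i=1}^n A^i_x$ by block-diagonality, the direct sum $\bigoplus_{i=1}^n D^i$ is a logarithmic connection on $E$ singular over $S$ with residue $A(x)$ at every $x\,\in\, S$, completing the proof.
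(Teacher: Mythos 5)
Your proposal is correct and follows essentially the same route as the paper: decompose $E$ into indecomposables via Krull--Schmidt, use rigidity to get block-diagonality of $A(x)$, restrict/assemble logarithmic connections summand by summand, and invoke Proposition~\ref{prop1} together with the residue formula of \cite[p.~16, Theorem~3]{Oh}. You even make explicit a detail the paper leaves implicit, namely that the restriction $A^i_x$ is rigid as an endomorphism of $E^i$ so that Proposition~\ref{prop1} genuinely applies to each summand.
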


\section*{Acknowledgements}

We thank the referee for helpful comments. The first-named author is supported by a
J. C. Bose Fellowship.


\end{document}